\newcommand{\RR}{\mathbb{R}}
\newcommand{\cF}{{\mathcal{F}}}
\newtheorem{theorem}{Theorem}
\newtheorem{corollary}{Corollary}
\begin{document}

\title{On the Linear Convergence of the Cauchy Algorithm for a Class of Restricted Strongly Convex Functions}

\author{Hui Zhang\thanks{
College of Science, National University of Defense Technology,
Changsha, Hunan, 410073, P.R.China. Corresponding author. Email: \texttt{h.zhang1984@163.com}}
}



\date{\today}

\maketitle

\begin{abstract}
In this short note, we extend the linear convergence result of the Cauchy algorithm, derived recently by E. Klerk, F. Glineur, and A. Taylor, from the case of smooth strongly convex functions to the case of restricted strongly convex functions with certain form.
\end{abstract}

\textbf{Keywords.} Cauchy algorithm, gradient method, linear convergence, restricted strongly convex
\newline

\textbf{AMS subject classifications.} 90C25, 90C22, 90C20.


\section{Introduction}
The Cauchy algorithm, which was proposed by Augustin-Louis Cauchy in 1847, is also known as the gradient method with exact line search. Although this is the first taught algorithm during introductory courses on nonlinear optimization, the worst-case convergence rate question of this method was not precisely understood until very recently the authors of \cite{Klerk2016on} settled it for strongly convex, continuously differentiable functions $f$ with Lipschitz continuous gradient. This class of functions, denoted by $\cF_{\mu,L}(\RR^n)$, can be described by the following inequalities:
\begin{align}
& \|\nabla f(x)-\nabla f(y)\|\leq L\|x-y\|, \quad\forall x, y\in\RR^n; \label{Lip} \\
&\langle \nabla f(x)-\nabla f(y), x-y\rangle \geq \mu \|x-y\|^2, \quad\forall x, y\in\RR^n.\label{SC}
\end{align}
The gradient descent method with exact line search can be described as follows.
\begin{algorithm}[htb]
\caption{The gradient descent method with exact line search}  \label{alg0}
\begin{tabbing}
\textbf{Input:} $f\in \cF_{\mu,L}(\RR^n)$, $x_0\in\RR^n$ .\\

1: \textbf{for} $k=0, 1, \cdots,$ \textbf{do}\\

2: \quad $\gamma=\arg\min_{\gamma\in \RR} f(x_i-\gamma\,\nabla f(x_i))$; \quad\quad\quad ${//}$ the exact linear search\\

3: \quad $x_{i+1}=x_i-\gamma\,\nabla f(x_i)$; \quad\quad\quad\quad\quad\quad\quad\quad ${//}$  the gradient descent\\

4: \textbf{end for}
\end{tabbing}
\vspace{-10pt}
\end{algorithm}

The authors of  \cite{Klerk2016on} obtained the following result, which settles the worst-case convergence rate question of the Cauchy algorithm on $\cF_{\mu,L}(\RR^n)$.
\begin{theorem}\label{mainresult0}
 Let $f\in \cF_{\mu,L}(\RR^n)$, $x_*$ a global minimizer of $f$ on $\RR^n$, and $f_*=f(x_*)$. Each iteration of the gradient
method with exact line search satisfies
\begin{equation}\label{linearcon}
 f(x_{i+1})-f_*\leq \left( \frac{L-\mu}{L+\mu}\right)^2 (f(x_{i})-f_*), ~~i=0, 1,  \cdots.
\end{equation}
\end{theorem}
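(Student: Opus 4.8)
The plan is to reduce the theorem to a single per-iteration inequality and then certify that inequality by combining the defining relations of $\cF_{\mu,L}(\RR^n)$ at the three relevant points. Throughout I abbreviate $g_i=\nabla f(x_i)$ and $g_{i+1}=\nabla f(x_{i+1})$, and recall that $\nabla f(x_*)=0$. The first step I would take is to extract the first-order optimality condition of the exact line search: since $\gamma$ minimizes $\gamma\mapsto f(x_i-\gamma g_i)$, differentiating and using the chain rule gives $\langle g_{i+1},g_i\rangle=0$. Together with the update rule $x_{i+1}-x_i=-\gamma g_i$, this orthogonality is the only place where the exact line search enters the analysis, and it is precisely what allows one to improve on the naive $(1-\mu/L)$ rate.

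Next I would assemble the toolbox. For $f\in\cF_{\mu,L}(\RR^n)$ the pointwise relations \eqref{Lip} and \eqref{SC} can be sharpened into the tight two-point (interpolation) inequality: for all $x,y\in\RR^n$,
\[
f(x)-f(y)-\langle \nabla f(y),x-y\rangle \ \ge\ \frac{1}{2(1-\mu/L)}\Big(\tfrac1L\|\nabla f(x)-\nabla f(y)\|^2+\mu\|x-y\|^2-\tfrac{2\mu}{L}\langle \nabla f(x)-\nabla f(y),x-y\rangle\Big),
\]
which follows by applying co-coercivity to the convex, $(L-\mu)$-smooth function $f-\tfrac{\mu}{2}\|\cdot\|^2$. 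I would instantiate this inequality for the ordered pairs drawn from $\{x_*,x_i,x_{i+1}\}$ that carry information about $f(x_i)-f_*$, $f(x_{i+1})-f_*$, and $f(x_i)-f(x_{i+1})$. After substituting $\nabla f(x_*)=0$ and $x_{i+1}-x_i=-\gamma g_i$, each instance becomes a quadratic inequality in the scalars $\|g_i\|^2$, $\|g_{i+1}\|^2$, $\langle g_i,g_{i+1}\rangle$, $\gamma$, and the gaps $f(\cdot)-f_*$.

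The core of the proof is then to exhibit nonnegative multipliers $\lambda_1,\lambda_2,\dots$ so that the corresponding weighted sum of interpolation inequalities collapses, after invoking $\langle g_i,g_{i+1}\rangle=0$, to exactly
\[
\Big(f(x_{i+1})-f_*\Big)-\Big(\tfrac{L-\mu}{L+\mu}\Big)^2\Big(f(x_i)-f_*\Big)\ \le\ 0 .
\]
The residual quadratic form in the gradient variables left after this cancellation must be manifestly nonpositive (a negative-semidefinite form, equivalently a sum of squares carrying the correct sign), and the demand that it degenerate at the worst-case configuration is what pins the constant to $\big(\tfrac{L-\mu}{L+\mu}\big)^2$ and nothing larger.

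The main obstacle is identifying the correct multipliers: they are the dual certificate of the associated performance-estimation semidefinite program, and there is no purely mechanical route to them. I would either guess the weights from the known extremal instance and verify the algebra, or solve the small SDP symbolically and read off a rational, $(L,\mu)$-dependent certificate. Once the multipliers are fixed the verification is routine but delicate, since the cross terms must cancel \emph{exactly}; a single wrong coefficient destroys the clean ratio. Finally I would confirm tightness by checking that a two-dimensional quadratic $f$ with Hessian eigenvalues $\mu$ and $L$ attains the bound, which both validates the constant and certifies that the chosen multipliers are optimal.
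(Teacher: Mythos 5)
Your plan reconstructs exactly the machinery behind this theorem in the source the paper cites (Klerk--Glineur--Taylor \cite{Klerk2016on}), and it is the same machinery the paper itself redeploys in Step 3 of its proof of Theorem \ref{mainresult}: the exact-line-search conditions $g_{i+1}^Tg_i=0$ and $g_{i+1}^T(x_{i+1}-x_i)=0$, the three $\cF_{\mu,L}$-interpolation inequalities instantiated at $\{x_*,x_i,x_{i+1}\}$ (your form of the two-point inequality is the correct one from \cite{Taylor2016smooth}, and your derivation of it via co-coercivity of the convex, $(L-\mu)$-smooth function $f-\tfrac{\mu}{2}\|\cdot\|^2$ is also how it is obtained there), and a nonnegative-multiplier aggregation that should collapse to $f(x_{i+1})-f_*\leq \rho^2\,(f(x_i)-f_*)$ with $\rho=\tfrac{L-\mu}{L+\mu}$. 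So the approach is the right one and would succeed if executed.

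But judged as a proof, there is a genuine gap: the multipliers are never exhibited, and the verification that the leftover quadratic form in $\|g_i\|^2$, $\|g_{i+1}\|^2$, $g_i^Tg_{i+1}$ and the distances is negative semidefinite is never carried out. You say so yourself (``there is no purely mechanical route to them \ldots I would either guess the weights \ldots or solve the small SDP symbolically''). That aggregation step, however, is the entire content of the theorem: the orthogonality of successive gradients and the interpolation conditions are standard, and the worst-case rate of the Cauchy algorithm remained open precisely because no one had produced this dual certificate. A complete proof must write down the explicit $(L,\mu)$-rational weights on the three interpolation inequalities and the two line-search equalities, and then verify --- as a sum-of-squares identity in which, as you note, every cross term must cancel exactly --- that the residual has the right sign; this is what the proof of Theorem 5.1 in \cite{Klerk2016on} does. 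Deferring it to ``solve the SDP'' leaves a correct plan of attack, not a proof. (To be fair, the paper under review never spells out these multipliers either: it imports Theorem \ref{mainresult0} wholesale from the reference, and in its own proof of Theorem \ref{mainresult} it says to ``repeat the arguments in the proof of Theorem 5.1''; your reconstruction matches the paper's level of detail, but as a standalone argument for the statement the certificate is the missing heart.)
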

 For the case of quadratic functions in $\cF_{\mu,L}(\RR^n)$ with the form
 $$f(x)=\frac{1}{2}x^TQx+ c^Tx,$$
where $Q\in\RR^{n\times n}$ is a positive definite matrix and $c\in \RR^n$ is a vector, the result in Theorem \ref{mainresult0} is well-known. The main contribution of \cite{Klerk2016on} is extending the linear convergence of \eqref{linearcon} from quadratic case to the case of $\cF_{\mu,L}(\RR^n)$. However, Theorem \ref{mainresult0} can not completely answer what will happen when the matrix $Q$ is positive semidefinite. In this note, we will further extend the linear convergence of \eqref{linearcon} to a certain class of restricted strongly convex (RSC) functions, or more concretely to the functions that have the form of $f(x)=h(Ax)$, where $h\in \cF_{\mu,L}(\RR^m)$ and $A\in \RR^{m\times n}$ with $m\leq n$, which cover the quadratic function $f(x)=\frac{1}{2}x^TQx+ c^Tx$ with positive semidefinite matrix $Q$.

 The concept of the restricted strongly convex was proposed in our previous paper \cite{zhang2015restricted,zhang2013gradient} and is strictly weaker than the strong convex; for detail and its recent development we refer the reader to \cite{zhang2015The,Frank2015linear,zhang2016new}. It is not difficulty to see that $f(x)=h(Ax)$ is generally not strongly convex even $h(\cdot)$ is unless $A$ has full column-rank. But it is always restricted strongly convex since it satisfies the following inequality that was proposed to define the RSC function:
 \begin{equation}
 \langle \nabla f(x), x-x^\prime\rangle \geq \nu \|x-x^\prime\|^2, \quad\forall x, y\in\RR^n,\label{RSC}
 \end{equation}
 where $x^\prime$ is the projection point of $x$ onto the minimizer set of $f(x)$, and $\nu$ is a positive constant.
 A decisive difference to strong convexity is that the set of minimizers of a RSC function need not be a singleton.

 Now, we state our main result as follows:
 \begin{theorem}\label{mainresult}
 Let $h\in \cF_{\mu,L}(\RR^m)$, $A\in \RR^{m\times n}$ with $m\leq n$ and having full row-rank, $f(x)=h(Ax)$, $x_*$ belong the minimizer set of $f$ on $\RR^n$, and $f_*=f(x_*)$. Denote $\kappa_h=\frac{\mu}{L}$ and $\kappa(A)=\frac{\lambda_{\min}(AA^T)}{\lambda_{\max}(AA^T)}$, where $\lambda_{\min}(AA^T)$ and $\lambda_{\max}(AA^T)$ stand for the smallest and largest eigenvalues of $AA^T$, respectively. Each iteration of the gradient method with exact line search satisfies
\begin{equation}\label{mainconv}
 f(x_{i+1})-f_*\leq \left(\frac{2-\kappa(A)-\kappa(A)\kappa_h}{2-\kappa(A)+\kappa(A)\kappa_h}\right)^2 (f(x_{i})-f_*), ~~i=0, 1,  \cdots.
\end{equation}
\end{theorem}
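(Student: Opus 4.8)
The plan is to reduce everything to Theorem~\ref{mainresult0} by a linear change of variables that turns the exact-line-search trajectory of $f$ into that of an auxiliary \emph{genuinely strongly convex} function. Write $M = AA^T$, which is an $m\times m$ symmetric positive definite matrix because $A$ has full row-rank, and let $M^{1/2}$ be its symmetric positive-definite square root. Define $\tilde h(z) = h(M^{1/2}z)$ on $\RR^m$, and for the iterates $\{x_i\}$ set $y_i = Ax_i$ and $z_i = M^{-1/2}y_i = M^{-1/2}Ax_i$. First I would record the gradient identities $\nabla f(x) = A^T\nabla h(Ax)$ and $\nabla\tilde h(z) = M^{1/2}\nabla h(M^{1/2}z)$, and observe that one step of the Cauchy algorithm on $f$, namely $x_{i+1} = x_i - \gamma A^T\nabla h(y_i)$, pushes forward to $y_{i+1} = y_i - \gamma M\nabla h(y_i)$.

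The key step is to show that, read in the $z$-variable, this is \emph{exactly} the Cauchy algorithm on $\tilde h$. The line-search objective for $f$ is $\psi(\gamma) = h\bigl(y_i - \gamma M\nabla h(y_i)\bigr)$, while the line-search objective for $\tilde h$ at $z_i$ is $\tilde h\bigl(z_i - \gamma\nabla\tilde h(z_i)\bigr) = h\bigl(M^{1/2}z_i - \gamma M\nabla h(y_i)\bigr) = h\bigl(y_i - \gamma M\nabla h(y_i)\bigr)$, the \emph{same} scalar function of $\gamma$. Since $h$ is strongly convex its one-dimensional restriction has a unique minimizer, so the two exact line searches return the same step size. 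An induction on $i$ with base case $M^{1/2}z_0 = y_0$ then gives $M^{1/2}z_i = y_i$ and $z_{i+1} = z_i - \gamma_i M^{1/2}\nabla h(y_i)$ for all $i$, so $\{z_i\}$ is precisely the Cauchy trajectory of $\tilde h$ and $\tilde h(z_i) = h(y_i) = f(x_i)$.

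Next I would verify that $\tilde h\in\cF_{\tilde\mu,\tilde L}(\RR^m)$ with $\tilde\mu = \mu\,\lambda_{\min}(AA^T)$ and $\tilde L = L\,\lambda_{\max}(AA^T)$, directly from \eqref{Lip}--\eqref{SC}. Writing $y = M^{1/2}z$, $y' = M^{1/2}z'$, inequality \eqref{SC} gives $\langle\nabla\tilde h(z)-\nabla\tilde h(z'),z-z'\rangle = \langle\nabla h(y)-\nabla h(y'),y-y'\rangle \geq \mu\|y-y'\|^2 \geq \mu\lambda_{\min}(M)\|z-z'\|^2$, and the same substitution with $\|M^{1/2}\|^2 = \lambda_{\max}(M)$ turns \eqref{Lip} into a Lipschitz constant $L\lambda_{\max}(M)$ for $\nabla\tilde h$. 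Because $M^{1/2}$ is invertible and $A$ is surjective, $\tilde h$, $h$ and $f$ all share the same minimum value $f_*$. Applying Theorem~\ref{mainresult0} to $\tilde h$ then yields $f(x_{i+1})-f_* \leq \bigl(\frac{\tilde L-\tilde\mu}{\tilde L+\tilde\mu}\bigr)^2(f(x_i)-f_*) = \bigl(\frac{1-\kappa_h\kappa(A)}{1+\kappa_h\kappa(A)}\bigr)^2(f(x_i)-f_*)$.

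Finally, since $\kappa(A)\leq 1$, an elementary cross-multiplication shows $\frac{1-\kappa_h\kappa(A)}{1+\kappa_h\kappa(A)} \leq \frac{2-\kappa(A)-\kappa(A)\kappa_h}{2-\kappa(A)+\kappa(A)\kappa_h}$, so the bound just obtained implies \eqref{mainconv} (and is in fact somewhat sharper). I expect the main obstacle to be the rigorous justification of the trajectory correspondence in the second paragraph: one must confirm that the exact line searches for $f$ and for $\tilde h$ return identical step sizes at every iteration so that the induction $M^{1/2}z_i = y_i$ closes, and one must check that the directions lying in $\ker A$ (which $f$ ignores, since $\nabla f \in \mathrm{range}(A^T)$) do not interfere. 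Once the dynamics are recognized as plain gradient descent on $\tilde h$ rather than a preconditioned method, everything else transfers routinely through the invertible map $M^{1/2}$.
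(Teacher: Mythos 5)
Your proof is correct, and it takes a genuinely different---and in fact stronger---route than the paper's. The paper rescales only by the scalar $\lambda=\sqrt{\lambda_{\max}(AA^T)}$, so in the transformed space successive gradients satisfy only $g_0^T\widetilde{A}\widetilde{A}^Tg_1=0$ rather than true orthogonality; it then relaxes this via $\|I-\widetilde{A}\widetilde{A}^T\|=1-\kappa(A)$ to $g_0^Tg_1\leq(1-\kappa(A))\|g_0\|\|g_1\|$ and reruns the interpolation-based performance-estimation argument of Klerk--Glineur--Taylor with that slack, which is exactly where the $2-\kappa(A)$ terms in \eqref{mainconv} come from. You instead whiten by the full matrix $M^{1/2}=(AA^T)^{1/2}$, and the trajectory correspondence you were worried about is airtight: the two line-search objectives are the \emph{identical} scalar function of $\gamma$, so any minimizing $\gamma$ for $f$ is a legitimate exact-line-search step for $\tilde h$ (the uniqueness you invoke via strong convexity holds but is not even needed), the kernel of $A$ never enters since only $y_i=Ax_i$ determines both the $f$-values and the pushforward dynamics, and the line-search optimality $\nabla h(y_{i+1})^TM\nabla h(y_i)=0$ becomes \emph{exact} orthogonality $\nabla\tilde h(z_{i+1})^T\nabla\tilde h(z_i)=0$ in $z$-space, so nothing is relaxed. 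Consequently Theorem \ref{mainresult0} applies as a black box to $\tilde h\in\cF_{\mu\lambda_{\min}(AA^T),\,L\lambda_{\max}(AA^T)}(\RR^m)$ and yields the rate $\left(\frac{1-\kappa_h\kappa(A)}{1+\kappa_h\kappa(A)}\right)^2$, which is strictly sharper than \eqref{mainconv} whenever $\kappa(A)<1$: your final cross-multiplication is right, the difference of the two sides reducing to $2\kappa_h\kappa(A)\bigl(1-\kappa(A)\bigr)\geq 0$, with equality iff $\kappa(A)=1$, in which case both rates recover Theorem \ref{mainresult0}. Worth making explicit: specialized to the quadratic setting of Corollary \ref{corr}, where $\kappa_h=1$ and $\kappa(B^T)=\lambda^{++}_{\min}(Q)/\lambda_{\max}(Q)$, your bound gives $\left(\frac{\lambda_{\max}(Q)-\lambda^{++}_{\min}(Q)}{\lambda_{\max}(Q)+\lambda^{++}_{\min}(Q)}\right)^2$, which affirmatively settles the open question the paper raises immediately after Corollary \ref{corr}; you should state this consequence rather than discarding your sharper estimate in the last step.
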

When $A$ is the identity matrix, Theorem \ref{mainresult} exactly recovers Theorem \ref{mainresult0} since $\kappa(A)=1$. From this sense, our main result is a further extension of Theorem \ref{mainresult0}. Recently, there appeared several papers \cite{zhang2015restricted,Frank2015linear,I2015Linear,zhang2016new}, the authors of which derived linear convergence results of gradient method with \textsl{fixed} step-length for RSC functions. The author of \cite{Frank2015linear} analyzed the linear convergence rate of gradient method with exact linear search for RSC functions, but he only showed the existence of linear convergence rate. Our novelty here lies in the exact expression \eqref{mainconv} for linear convergence.

For generally quadratic case, we have the following consequence.
\begin{corollary}\label{corr}
Let $f(x)=\frac{1}{2}x^TQx+ c^Tx,$
where $Q\in\RR^{n\times n}$ is a positive semidefinite matrix and $c\in \RR^n$ is a vector; both of them are given. Assume that there is a vector $x_*$ minimizing $f(x)$ on $\RR^n$ and denote $f_*=f(x_*)$. Then, each iteration of the gradient method with exact line search satisfies
\begin{equation}\label{lincon}
 f(x_{i+1})-f_*\leq \left(1- \frac{\lambda^{++}_{\min}(Q)}{ \lambda_{\max}(Q)}\right)^2 (f(x_{i})-f_*), ~~i=0, 1,  \cdots,
\end{equation}
where $\lambda^{++}_{\min}(Q)$ stands for the smallest strictly positive eigenvalue of $Q$ and $\lambda_{\max}(Q)$ is the largest eigenvalue of $Q$.
\end{corollary}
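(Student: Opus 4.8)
The plan is to obtain Corollary~\ref{corr} as a direct specialization of Theorem~\ref{mainresult} by exhibiting an explicit factorization $f(x)=h(Ax)$ in which all of the ill-conditioning is carried by $A$ while $h$ is made perfectly conditioned. To set this up I would first pass to the spectral decomposition of the positive semidefinite matrix $Q$ restricted to its nonzero spectrum. Let $r=\mathrm{rank}(Q)$, let $U_1\in\RR^{n\times r}$ collect orthonormal eigenvectors associated with the strictly positive eigenvalues, and let $\Lambda_1=\mathrm{diag}(\lambda_1,\dots,\lambda_r)$ with $\lambda_1=\lambda_{\max}(Q)\geq\cdots\geq\lambda_r=\lambda^{++}_{\min}(Q)>0$, so that $Q=U_1\Lambda_1 U_1^T$.

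With this in hand, I would set $A=\Lambda_1^{1/2}U_1^T\in\RR^{r\times n}$ and $h(y)=\tfrac12\|y\|^2+\tilde c^Ty$ on $\RR^r$. Since $U_1$ has orthonormal columns, $AA^T=\Lambda_1$, so $A$ has full row-rank, $m=r\leq n$, and $\kappa(A)=\lambda_r/\lambda_1=\lambda^{++}_{\min}(Q)/\lambda_{\max}(Q)$; moreover $h\in\cF_{1,1}(\RR^r)$ because its Hessian is the identity, whence $\kappa_h=1$. A short computation gives $A^TA=U_1\Lambda_1 U_1^T=Q$, so that $h(Ax)=\tfrac12 x^TQx+(A^T\tilde c)^Tx$, and it remains only to choose $\tilde c$ so that $A^T\tilde c=c$.

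The one genuine point that uses the hypotheses is the solvability of $A^T\tilde c=c$, i.e. the requirement $c\in\mathrm{range}(A^T)=\mathrm{range}(Q)$. Here I would invoke the assumed existence of a minimizer $x_*$: the stationarity condition $\nabla f(x_*)=Qx_*+c=0$ forces $c=-Qx_*\in\mathrm{range}(Q)$. Writing $c=U_1w$ for some $w\in\RR^r$, I may then take $\tilde c=\Lambda_1^{-1/2}w$, which is well defined since $\Lambda_1$ is invertible, and this yields $f(x)=h(Ax)$ exactly. I expect this existence/range argument to be the only subtle step; everything else is bookkeeping about matrix products.

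Finally I would apply Theorem~\ref{mainresult} to this representation. Since the iterates of the gradient method with exact line search depend only on $f$ and not on any chosen factorization, the bound of Theorem~\ref{mainresult} is valid for this particular $(h,A)$; substituting $\kappa_h=1$ and $\kappa(A)=\lambda^{++}_{\min}(Q)/\lambda_{\max}(Q)$ into \eqref{mainconv} collapses the numerator $2-\kappa(A)-\kappa(A)\kappa_h$ to $2\bigl(1-\kappa(A)\bigr)$ and the denominator $2-\kappa(A)+\kappa(A)\kappa_h$ to $2$, giving the claimed rate $\bigl(1-\lambda^{++}_{\min}(Q)/\lambda_{\max}(Q)\bigr)^2$. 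This closing simplification is routine algebra.
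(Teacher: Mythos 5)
Your proposal is correct and is essentially the paper's own argument: the same reduced spectral factorization $A=\Lambda_1^{1/2}U_1^T$ (the paper's $B^T$), the same use of stationarity $c=-Qx_*$ to realize $f(x)=h(Ax)$ with $h\in\cF_{1,1}$, and the same specialization $\kappa_h=1$, $\kappa(A)=\lambda^{++}_{\min}(Q)/\lambda_{\max}(Q)$ in Theorem~\ref{mainresult}. The only cosmetic difference is that the paper picks the shift explicitly as $u=-\Sigma^{1/2}U^Tx_*$ (writing $h(y)=\tfrac12\|y+u\|^2-\tfrac12\|u\|^2$, which equals your $\tfrac12\|y\|^2+\tilde c^Ty$), whereas you solve $A^T\tilde c=c$ via the range condition $c\in\mathrm{range}(Q)$ --- the two choices coincide.
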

 When $Q$ is positive definite, applying Theorem \ref{mainresult0} to the quadratic function $f(x)=\frac{1}{2}x^TQx+ c^Tx$, we can get  $$f(x_{i+1})-f_*\leq \left(\frac{\lambda_{\max}(Q)-\lambda_{\min}(Q)}{ \lambda_{\max}(Q)+\lambda_{\min}(Q)}\right)^2(f(x_{i})-f_*), ~~i=0, 1,  \cdots.$$
But from Corollary \ref{corr}, we can only obtain a worse linear convergence rate $\left(1- \frac{\lambda_{\min}(Q)}{ \lambda_{\max}(Q)}\right)^2$. Therefore, we wonder whether one can improve the rate  in Corollary \ref{corr} from $\left(1- \frac{\lambda^{++}_{\min}(Q)}{ \lambda_{\max}(Q)}\right)^2$ to $$\left( \frac{\lambda_{\max}(Q)-\lambda^{++}_{\min}(Q)}{ \lambda_{\max}(Q)+\lambda^{++}_{\min}(Q)}\right)^2.$$

\section{Proof}
\begin{proof}[Proof of Theorem \ref{mainresult}] We divide the proof into three steps.

\textbf{Step 1.} Denote $\lambda=\sqrt{\lambda_{\max}(AA^T)}$ and let $\widetilde{h}_\lambda(y)=h(\lambda y)$. Since $\nabla \widetilde{h}_\lambda(y)=\lambda\cdot \nabla h(\lambda y)$, from the definition of $h\in \cF_{\mu,L}(\RR^m)$, we have that
\begin{align}
& \|\nabla \widetilde{h}_\lambda(y)-\nabla \widetilde{h}_\lambda(z)\|\leq \lambda^2 L\|y-z\|, \quad\forall y, z\in\RR^m;   \\
&\langle \nabla \widetilde{h}_\lambda(y)-\nabla \widetilde{h}_\lambda(z), y-z\rangle \geq \lambda^2 \mu \|y-z\|^2, \quad\forall y, z\in\RR^m.
\end{align}
Denote $\widetilde{u}=\lambda^2 \mu. \widetilde{L}=\lambda^2 L$ and $\widetilde{A}=\lambda^{-1}A$; Then we can conclude that $\widetilde{h}_\lambda(y)\in \cF_{\widetilde{\mu},\widetilde{L}}(\RR^m)$ and
$$f(x)=h(Ax)=\widetilde{h}_\lambda(\widetilde{A}x).$$

\textbf{Step 2.} The iterates, generated by the gradient method with exact line search, satisfy the following two conditions for $i=0, 1, \cdots,$
\begin{align}
& x_{i+1}-x_i+\gamma \nabla f(x_i)= 0,~~ \textrm{for~some}~~ \gamma >0;   \\
& \nabla f(x_{i+1})^T(x_{i+1}-x_i)= 0,\label{cond1}
\end{align}
where the first condition follows from the gradient descent step, and the second condition is an alternative expression of the exact linear search step. Since $\gamma>0$, it holds that successive gradients are orthogonal, i.e.,
\begin{equation}\label{cond2}
 \nabla f(x_{i+1})^T\nabla f(x_i)= 0, ~i=0, 1, \cdots.
\end{equation}
Note that $\nabla f(x)=\widetilde{A}^T\nabla \widetilde{h}_\lambda(\widetilde{A}x)$. Denote $y_i=\widetilde{A}x_i, i=0, 1, \cdots$; Then, in light of the conditions \eqref{cond1} and \eqref{cond2}, we can get that
\begin{align}
& \nabla \widetilde{h}_\lambda(y_{i+1})^T(y_{i+1}-y_i)= 0, ~i=0, 1, \cdots; \label{cond3}  \\
& \nabla \widetilde{h}_\lambda(y_{i+1})^T\widetilde{A}\widetilde{A}^T\nabla \widetilde{h}_\lambda(y_i)= 0, ~i=0, 1, \cdots. \label{cond4}
\end{align}

\textbf{Step 3.} Following the arguments in \cite{Klerk2016on}, we consider only the first iterate, given by $x_0$ and $x_1$, as well as the minimizer $y_*$ of $\widetilde{h}_\lambda(y)\in \cF_{\widetilde{\mu},\widetilde{L}}(\RR^m)$.

Set $h_i=\widetilde{h}_\lambda(y_i)$ and $g_i=\nabla \widetilde{h}_\lambda(y_i)$ for $i\in\{*, 0, 1\}$. Let $\epsilon =1-\kappa(A)$. Then, the following five inequalities are satisfied:
\begin{enumerate}
  \item[1:]~~ $h_0\geq h_1+g^T_1(y_0-y_1)+\frac{1}{2(1-\widetilde{\mu}/\widetilde{L})}\left(\frac{1}{\widetilde{L}}\|g_0-g_1\|^2+\widetilde{\mu} \|y_0-y_1\|^2-2\frac{\widetilde{\mu}}{\widetilde{L}}(g_1-g_0)^T(y_1-y_0)\right)$
  \item[2:]~~ $h_*\geq h_0+g^T_0(y_*-y_0)+\frac{1}{2(1-\widetilde{\mu}/\widetilde{L})}\left(\frac{1}{\widetilde{L}}\|g_*-g_0\|^2+\widetilde{\mu} \|y_*-y_0\|^2-2\frac{\widetilde{\mu}}{\widetilde{L}}(g_0-g_*)^T(y_0-y_*\right)$
  \item[3:]~~ $h_*\geq h_1+g^T_1(y_*-y_1)+\frac{1}{2(1-\widetilde{\mu}/\widetilde{L})}\left(\frac{1}{\widetilde{L}}\|g_*-g_1\|^2+\widetilde{\mu} \|y_*-y_1\|^2-2\frac{\widetilde{\mu}}{\widetilde{L}}(g_1-g_*)^T(y_1-y_*)\right)$
  \item[4:]~~ $0\geq g^T_1(y_1-y_0)$
  \item[5:]~~ $0\geq g^T_0g_1-\epsilon\|g_0\|\|g_1\|,$
\end{enumerate}
where the first three inequalities are the $\cF_{\mu,L}$-interpolability conditions (see Theorem 4 in \cite{Taylor2016smooth}), the fourth inequality is a relaxation of \eqref{cond3}. It remains to show the fifth inequality. Indeed, in terms of \eqref{cond4}, we have $g^T_o\widetilde{A}\widetilde{A}^Tg_1=0$ and hence by the Cauchy-Schwartz inequality we can derive that
$$g^T_0g_1=g^T_o(I-\widetilde{A}\widetilde{A}^T)g_1\leq \|I-\widetilde{A}\widetilde{A}^T\|\cdot\|g_0\|\|g_1\|,$$
where $I$ is the identity matrix.
Since $\widetilde{A}=\lambda^{-1}A=\frac{A}{\sqrt{\lambda_{\max}(AA^T)}}$, it holds that
$$\|\widetilde{A}\widetilde{A}^T\|=\frac{\|AA^T\|}{\lambda_{\max}(AA^T)}\leq 1$$ and hence
$$\|I-\widetilde{A}\widetilde{A}^T\|=1-\lambda_{\min}(\widetilde{A}\widetilde{A}^T)
=1-\frac{\lambda_{\min}(AA^T)}{\lambda_{\max}(AA^T)}=1-\kappa(A).$$
Therefore,
$$g^T_0g_1 \leq (1-\kappa(A))\|g_0\|\|g_1\|=\epsilon \|g_0\|\|g_1\|.$$
Since $A$ is full row-rank, it is not difficulty to see that $0\leq \epsilon <1$. Now, we can repeat the arguments in the proof Theorem 5.1 in \cite{Klerk2016on} and get
\begin{equation}
 h_1-h_*\leq \rho_\epsilon^2(h_1-h_*),
\end{equation}
where $\rho_\epsilon=\frac{1-\kappa_\epsilon}{1+\kappa_\epsilon}$ and $\kappa_\epsilon=\frac{\widetilde{\mu}(1-\epsilon)}{\widetilde{L}(1+\epsilon)}$. After a simple calculus, we obtain
$$\rho_\epsilon=\frac{2-\kappa(A)-\kappa(A)\kappa_h}{2-\kappa(A)+\kappa(A)\kappa_h}.$$
Finally, noting that $f_*=h_*$ and for $i\in\{0,1\}$ it holds
$$f(x_i)=h(Ax_i)=\widetilde{h}_\lambda(\widetilde{A}x_i)=\widetilde{h}_\lambda(y_i)=h_i,$$
we get
$$f(x_1)-f_*\leq \left(\frac{2-\kappa(A)-\kappa(A)\kappa_h}{2-\kappa(A)+\kappa(A)\kappa_h}\right)^2(f(x_0)-f_*),$$
from which the conclusion follows. This completes the proof.
\end{proof}

\begin{proof}[Proof of Corollary \ref{corr}]
Since $\nabla f(x)=Qx+c$, by the assumption we have that $-Qx_*=c$. Let $m=\textrm{rank}(Q)$ and let $Q=U\Sigma U^T$ be the reduced singular value decomposition, where $U\in\RR^{n\times m}$ has orthogonal columns, and $\Sigma\in\RR^{m\times m}$ is a diagonal matrix with the nonzero eigenvalues of $Q$ as its diagonal entries. Denote $B=U\Sigma^{\frac{1}{2}}$, and $u=-\Sigma^{\frac{1}{2}}U^Tx_*$; Then
$$f(x)=\frac{1}{2}x^TBB^Tx+(B^Tx)^Tu=\frac{1}{2}\|B^Tx+u\|^2-\frac{1}{2}\|u\|^2.$$
Let $h(y)=\frac{1}{2}\|y+u\|^2-\frac{1}{2}\|u\|^2$. Then we have $f(x)=h(B^Tx)$ with $h(y)\in \cF_{1,1}(\RR^m)$ and $B^T$ having full row-rank. Note that $\kappa_h=1$ and $$\kappa(B^T)=\frac{\lambda_{\min}(B^TB)}{\lambda_{\max}(B^TB)}=\frac{\lambda^{++}_{\min}(Q)}{ \lambda_{\max}(Q)}.$$
Therefore, the desired result follows from Theorem \ref{mainresult}. This completes the proof.
\end{proof}

\section*{Acknowledgements}
This work is supported by the National Science Foundation of China (No.11501569 and No.61571008).

\bibliographystyle{abbrv}

\begin{thebibliography}{1}

\bibitem{Klerk2016on}
E.~d. Klerk, F.~Glineur, and A.~B. Taylor.
\newblock On the worst-case complexity of the gradient method with exact line
  search for smooth strongly convex functions.
\newblock {\em submitted to Optimization Letter, arXiv:1606.09365v1 [math.OC]
  30 Jun 2016.}

\bibitem{I2015Linear}
I.~Necoara, Y.~Nesterov, and F.~Glineur.
\newblock Linear convergence of first order methods for non-strongly convex
  optimization.
\newblock {\em arXiv:1504.06298v2 [math.OC] 23 Apr 2015.}

\bibitem{Frank2015linear}
F.~Sch$\ddot{o}$pfer.
\newblock Linear convergence of descent methods for the unconstrained
  minimization of restricted strongly convex functions.
\newblock {\em SIAM J. Optim.}, 26(3):1883--1911, 2016.

\bibitem{Taylor2016smooth}
A.~B. Taylor, J.~M. Hendrickx, and F.~Glineur.
\newblock Smooth strongly convex interpolation and exact worst-case performance
  of first-order methods.
\newblock {\em Math. Program., Ser. A}, DOI 10.1007/s10107-016-1009-3, 2016.

\bibitem{zhang2015The}
H.~Zhang.
\newblock The restricted strong convexity revisited: analysis of equivalence to
  error bound and quadratic growth.
\newblock {\em Optimization Letter, DOI:10.1007/s11590-016-1058-9, 2016.}

\bibitem{zhang2016new}
H.~Zhang.
\newblock New analysis of linear convergence of gradient-type methods via
  unifying error bound conditions.
\newblock {\em arXiv:1606.00269v4 [math.OC]}, 2016.

\bibitem{zhang2015restricted}
H.~Zhang and L.~Cheng.
\newblock Restricted strong convexity and its applications to convergence
  analysis of gradient-type methods.
\newblock {\em Optimization Letter}, 9(5):961--979, 2015.

\bibitem{zhang2013gradient}
H.~Zhang and W.~T. Yin.
\newblock Gradient methods for convex minimization: better rates under weaker
  conditions.
\newblock Technical report, CAM Report 13-17, UCLA, 2013.

\end{thebibliography}

\end{document}